\def\ni{\noindent}
\newtheorem{theorem}{Theorem}[section]
\newtheorem{lemma}[theorem]{Lemma}
\newtheorem{dfn}[theorem]{Definition}
\newtheorem{rmk}[theorem]{Remark}
\def\ni{\noindent}
\numberwithin{equation}{section}
\begin{document}
	\begin{center}
		{\Large \bf Distance spectra of some double join of graphs and some new families of distance equienergetic graphs}
		
		\vspace{10mm}
		
		{\large \bf Rakshith B. R.}$^{1,\ast}$,  {\large \bf B. J. Manjunatha}$^{2,3}$
		
		\vspace{9mm}

		\baselineskip=0.20in

		$^1${\it Department of Mathematics, Manipal Institute of Technology,\\ Manipal Academy of Higher Education,\\
			Manipal 576 104, India\/} \\
		{\rm E-mail:} {\tt ranmsc08@yahoo.co.in}\\[2mm]
		
		$^2${\it Department of Mathematics\\ Vidyavardhaka College of Engineering\\
			Mysuru-570 002, India\/}\\[2mm] 
		
		$^3${\it Department of Mathematics, Sri Jayachamarajendra College of Engineering,\\ JSS Science and Technology University,\\
			Mysuru–570 006, India\/} \\
		{\rm E-mail:} {\tt manjubj@sjce.ac.in}
		
		\vspace{4mm}
		
		%(Received February 02, 2022)
	\end{center}
	
	\vspace{5mm}
	
	\baselineskip=0.20in
	
	\begin{abstract}
In this paper we compute the spectrum of a special block matrix and use it to describe the distance spectra of some double join of graphs. As an application, we give several families of distance equienergetic graphs of diameter 3.   
	\end{abstract}
	\ni  \textbf{Keywords}: Double join graph, distance spectrum, distance equienergetic graphs.   \\\\
	\ni \textbf{MSC (2010)}:  05C50.
	\baselineskip=0.30in
	\section{Introduction}
	All the graphs considered here are simple, connected and undirected. Let $G$ be a graph with vertex set $V(G)=\{v_{1},v_{2},\ldots,v_{n}\}$ and edge set $E(G)=\{e_{1},e_{2},\ldots,e_{m}\}$. We denote the spectrum of the well-known adjacency matrix of $G$ by $Spec(G)=\{\lambda_{1}(G),\lambda_{2}(G),\ldots,\lambda_{n}(G)\}$, where $\lambda_{1}(G)\ge\lambda_{2}(G)\ge\cdots\ge\lambda_{n}(G)$. The edge-vertex incidence matrix of $G$ is denoted by $M(G)$ and is defined as the matrix of order $m\times n$ whose ${ij}$-th entry is 1 if $v_{j}$ is an end vertex of the edge $e_{i}$. The \textit{distance matrix} of $G$ is $\mathcal{D}(G)=(d_{ij})_{n\times n}$, where $d_{ij}$ is the distance between the vertices $v_{i}$ and $v_{j}$ in $G$. The distance matrix was introduced in the year 1971 by Graham and Pollack \cite{graham}  to study data communication problems. Studies on the eigenvalues of the distance matrix can be found in the survey article \cite{survey}.\\[2mm]  In spectral graph theory, graph operations play an important role in construction of special classes of graphs. Some of the well-known graph operations are complement, disjoint union,
	join, the NEPS (particulary the cartesian product, the direct product, the strong product and the lexicographic product), the corona product, edge (neighborhood) corona product, subdivision (edge) vertex join, etc. A survey on spectra of graphs resulting from various
	graph operations and products is done in \cite{barik}. Computation of distance spectra of some graph compositions can be found in \cite{ds1,eq1,equ2,Haritha,adiga} and therein cited references. The  \textit{subdivision graph} of $G$, denoted by $S(G)$, is obtained by inserting a new vertex into every edge of $G$. The graph $Q(G)$ is obtained from $S(G)$ by adding an edge between two new vertices whenever the corresponding edges are adjacent. The graph $R(G)$ is obtained by introducing a new vertex corresponding to every edge of $G$, and then joining the new vertex to the end vertices of the corresponding edge. The \textit{total graph} of $G$, denoted by $T(G)$, is obtained from $R(G)$ by adding an edge between two new vertices whenever the corresponding edges are adjacent. The \textit{line graph} $L(G)$ of $G$ is a graph with vertex set $E(G)$ and two vertices are adjacent if there are adjacent edges in $G$.  \\[2mm] Let the new vertices of $S(G)$ be denoted by $e_{i}$, $i=1,2,\ldots,m$. Let $H_{1}$ and $H_{2}$ be two graphs with vertex set $V(H_{1})=\{e_{1},e_{2},\ldots,e_{m}\}$ and $V(H_{2})=\{v_{1},v_{2},\ldots,v_{n}\}$, respectively. The \textit{$(H_{1}, H_{2})$-merged subdivision graph} of $G$ \cite{rajkumar} is obtained by taking one copy of $S(G)$ and adding an edge between vertices $v_{i}$ and $v_{j}$ in $S(G)$ whenever $v_{i}v_{j}\in E(H_{2})$, and also by adding an edge between the new vertices $e_{i}$ and $e_{j}$ if $e_{i}e_{j}\in E(H_{1})$. It is denoted by $[S(G)]^{H_{1}}_{H_{2}}$. Note that $[S(G)]^{\overline{K_{m}}}_{\overline{K_{n}}}\cong S(G)$, $[S(G)]^{\overline{K_{m}}}_{G}\cong R(G)$, $[S(G)]^{L(G)}_{\overline{K_{n}}}\cong Q(G)$ and $[S(G)]^{L(G)}_{G}\cong T(G)$. In \cite{rajkumar}, the authors obtained the adjacency spectra and Laplacian spectra of $[S(G)]^{H_{1}}_{H_{2}}$ for some classes of graphs $G$, $H_{1}$ and $H_{2}$. Let $F\in\{S,R,Q,T\}$. The \textit{double join} of $F(G)$ with graphs $G_{1}$ and $G_{2}$ \cite{tian} is obtained by taking one copies of $F(G)$, $G_{1}$ and $G_{2}$, and joining each vertex of $G$ in $F(G)$ with every vertices of $G_{1}$, and also by joining each new vertex of $F(G)$ with every vertices of $G_{2}$. It is denoted by $G^{F}\vee\{G_{1}^{\bullet},G_{2}^{\circ}\}$. In \cite{tian}, Tian, He and Cui obtained the Laplacian spectrum of $G^{F}\vee\{G_{1}^{\bullet},G_{2}^{\circ}\}$ when $G$ is a regular graph. 	 
	  In analogous to the definition of double join of $F(G)~(F\in \{S,T,R,G\})$ with graphs $G_{1}$ and $G_{2}$, we define double join of $(H_{1},H_{2})$-merged subdivision graph of $G$ with graphs $G_{1}$ and $G_{2}$ as follows:
	\begin{dfn}
		The double join of $(H_{1}, H_{2})$-merged subdivision graph $[S(G)]^{H_{1}}_{H_{2}}$ with the graphs $G_{1}$ and $G_{2}$ is the graph obtained by taking one copy of   $[S(G)]^{H_{1}}_{H_{2}}$, $G_{1}$ and $G_{2}$, then joining each vertex of $G$ in $[S(G)]^{H_{1}}_{H_{2}}$ with all the vertices of $G_{2}$
		and also joining each new vertex of $[S(G)]^{H_{1}}_{H_{2}}$ with all the vertices of $G_{1}$. It is denoted by $[S(G)]^{H_{1}}_{H_{2}}\vee \{G^{\circ}_{1},G^{\bullet}_{2}\}$. 
	\end{dfn}
\begin{figure}[t]
	\centering{\includegraphics[height=1in,width=1.8in]{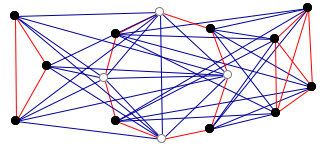}}	
	\caption{Graph $S[C_{4}]^{K_{4}}_{\overline{K_{4}}}\vee\{C_{3},K_{4}\}$}
\end{figure}
\ni This paper focuses on the distance spectrum of the
double join graph $[S(G)]^{H_{1}}_{H_{2}}\vee \{G^{\circ}_{1},G^{\bullet}_{2}\}$. Clearly the graph $[S(G)]^{H_{1}}_{H_{2}}\vee \{G^{\circ}_{1},G^{\bullet}_{2}\}$ is a generalization of the double join graph $G^{F}\vee\{G_{1}^{\bullet},G_{2}^{\circ}\}$. 
	The \textit{distance energy} of a graph $G$ is denoted by $\mathcal{E}_{D}(G)$ and is defined to be the sum of all absolute values of the eigenvalues of the distance matrix $\mathcal{D}(G)$. In analogous to graph energy (ordinary energy of a graph), the concept of distance energy was put forward in the year 2008 by Indulal, Gutman and Vijayakumar \cite{indulal}. Two graphs of same order is \textit{distance equienergetic} if their distance energies are same. In \cite{ramane}, Ramane et
	al. constructed a pair of distance equienergetic graphs of diameter 2 on $9+n$ vertices for
	all $n \ge 1$. Some other constructions of distance equienergetic graphs can be found in \cite{eq1,equ2,adiga,Haritha} and therein cited references.\\[2mm]
The paper is organized as follows. In Section 2, we compute the spectrum of a block matrix whose structure coincides with the distance matrix of  $[S(G)]^{H_{1}}_{H_{2}}\vee \{G^{\circ}_{1},G^{\bullet}_{2}\}$ in many cases. In Section 3, we give the  distance spectra of the double join graph $[S(G)]^{H_{1}}_{H_{2}}\vee \{G^{\circ}_{1},G^{\bullet}_{2}\}$ for some classes of graphs $G$, $H_{1}$, $H_{2}$, $G_{1}$ and $G_{2}$. As an application of our results, in Section 4, we give several families of distance equienergetic graphs of diameter 3.
\subsection{Notations}
 The following notations will be used in the subsequent sections.  	 	  
Let $\mathcal{M}_{m\times n}(\mathbb{R})$ denote the set of all of real matrices of order $m\times n$ and
let $\mathcal{S}_{n}(r)$ be the set of all real symmetric matrices of order $n$ such that each of its row sum is a constant $r$. We denote by  $J_{n\times m}$, the matrix of order $n\times m$ whose all entries are equal to 1. The column vector of order $n\times 1$ whose $i$th entry is 1 and all its other entries are 0, is denoted by $e_{i,n}$. Let $J^{\prime}_{n\times m}=e_{1,n}e^{T}_{1,m}$ and let $\bf{1}_{n}$ be the column vector of size $n$ whose all entries are equal to one. As usual, we denote by $C_{n}$ the cycle graph, by $K_{n}$ the complete graph, by $\overline{G}$ the complement graph of $G$, each on $n$ vertices.  \\ 
\section{Spectrum of a partitioned matrix}
This section deals with finding the spectrum of a special blocked matrix given in Definition \ref{def}.
\begin{dfn}\label{def} Let $A \in \mathcal{S}_{m}(a)$, $B\in \mathcal{S}_{n}(b)$, $C\in \mathcal{S}_{p}(c)$, $D\in \mathcal{S}_{q}(d)$ with $m\ge n$. Let $s$, $k$ and $l$ be real constants. Define a partitioned matrix $\mathcal{P}=\mathcal{P}[A,B,C,D,M,s,k,l]$ as follows:
$$\mathcal{P}=\left[\begin{array}{cccc}
A&M&sJ_{m\times p }&kJ_{m\times q}\\
M^{T}&B&kJ_{n\times p }&sJ_{n\times q}\\
sJ_{p\times m }&kJ_{p\times n}&C&lJ_{p\times q}\\
kJ_{q\times m }&sJ_{q\times n}&lJ_{q\times p}&D\\
\end{array}\right],$$
where $M$ is a real rectangular matrix of order $m\times n$ satisfying the condition (a) as given below.\begin{enumerate}[(a)]
\item
$M$ has a singular value decomposition, $M=U_{m\times m}M^{\prime}_{m\times n}V^{T}_{n\times n}$ with singular values $m_{1}(\neq 0), m_{2},\ldots, m_{n}$ such that if $X_{i}:=Ue_{i,m} (i=1,2,\ldots,m)$ and $Y_{j}:=Ve_{j,n} (j=1,2,\ldots,n)$, then $X_{i}$'s and $Y_{j}$'s  form a set of orthonormal eigenvectors of $A$ and $B$, respectively. That is, $AX_{i}=a_{i}X_{i}$ and $BY_{j}=b_{j}Y_{j}$, where $a_{i}~(i=1,2,\ldots,m)$ and $b_{j}~(j=1,2,\ldots,n)$ are the eigenvalues of $A$ and $B$, respectively. Also, $M\textbf{1}_{n}=t\textbf{1}_{m}$ and $Y_{1}=\dfrac{1}{\sqrt{n}}\textbf{1}_{n}$ for some scalar $t$.     
\end{enumerate} 
\end{dfn}
\ni The matrix $\mathcal{P}$ defined above is a real symmetric block square matrix of order $m+n+p+q$.\\[2mm] 
We need the following well-known Schur complement formula to describe the spectrum of the block matrix $\mathcal{P}$.
\begin{lemma}{\rm\cite{bapat}}\label{det}
	Let $A=\left[\begin{array}{cc}P&Q\\R&S\end{array}\right]$ be a block matrix. Let  $P$ and $S$ be square matrices.\\
	a. If $P$ is invertible, then $\det(A)=\det(P)\det(S-RP^{-1}Q)$.\\
	b. If $S$ is invertible, then  $\det(A)=\det(S)\det(P-QS^{-1}R)$.	
\end{lemma}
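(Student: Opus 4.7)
\begin{pf}[Proof proposal]
The plan is to reduce both parts to the classical fact that the determinant of a block-triangular matrix equals the product of the determinants of its diagonal blocks. The key step in each case is to exhibit a one-line block factorization of $A$ into a unit block-triangular factor times a block-triangular factor with the desired diagonal blocks; after that, taking determinants is immediate.

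For part (a), assuming $P$ invertible, I would propose the block \textit{LU}-type factorization
$$A=\begin{pmatrix} I & 0 \\ RP^{-1} & I \end{pmatrix}\begin{pmatrix} P & Q \\ 0 & S-RP^{-1}Q \end{pmatrix},$$
which is checked in four routine block-multiplication entries. The first factor is unit lower block-triangular, so its determinant equals $1$; the second factor is upper block-triangular, so its determinant equals $\det(P)\cdot\det(S-RP^{-1}Q)$. Combining these gives the stated identity.

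For part (b), assuming $S$ invertible, I would mirror the argument using the block \textit{UL}-type factorization
$$A=\begin{pmatrix} I & QS^{-1} \\ 0 & I \end{pmatrix}\begin{pmatrix} P-QS^{-1}R & 0 \\ R & S \end{pmatrix},$$
again verified by direct block multiplication. The first factor is unit upper block-triangular (determinant $1$), and the second is lower block-triangular with diagonal blocks $P-QS^{-1}R$ and $S$, whose determinants multiply to $\det(S)\det(P-QS^{-1}R)$.

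There is no essential obstacle here, since the result is a classical Schur complement identity stated in \cite{bapat}. The only subtlety is ensuring that $P^{-1}$ in (a) and $S^{-1}$ in (b) actually exist, which is built into the hypotheses, and justifying the block-triangular determinant formula itself --- this can be done either by Laplace expansion along the rows of the zero block or by reducing each diagonal block to triangular form by further elementary row operations that do not change the determinant.
\end{pf}
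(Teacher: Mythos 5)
Your proof is correct: both block factorizations multiply out to $A$ exactly as claimed, and the block-triangular determinant formula finishes each part. The paper itself gives no proof of this lemma --- it is quoted directly from \cite{bapat} as a known result --- and your block $LU$/$UL$ factorization argument is the standard textbook derivation of the Schur complement identity, so there is nothing to reconcile with the paper's treatment.
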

\ni Let the spectra of the matrices $C$ and $D$ as defined in Definition {\rm\ref{def}} be $\{c_{1}=c,c_{2},\ldots,c_{p}\}$ and $\{d_{1}=d,d_{2},\ldots,d_{q}\}$, respectively. The following theorem gives the spectrum of the block matrix $\mathcal{P}$.
\begin{theorem}\label{thm}
Let $\mathcal{P}$ be a matrix as defined in Definition {\rm\ref{def}}. Then the spectrum of the matrix $\mathcal{P}$ consists of:
\begin{enumerate}[$\bullet$]
	\item
	$c_{i}$ and $d_{j}$ for $i=2,3,\ldots,p$ and $j=2,3,\ldots,q$;
\item
$a_{i}$ for $i=n+1,n+2,\ldots, m$;
	\item
$\dfrac{1}{2}\left((a_{i}+b_{i})\pm \sqrt{(a_{i}-b_{i})^{2}+4m^{2}_{i}}\right)$ for $i=2,3,\ldots,n$;
\item
The four roots of the polynomial,\\ $f(x)={x}^{4}- \left( a+b+c+d\right) {x}^{3}+ \left((a+b)(c+d)+ab+cd-k^{2}(mq+np)-s^2(mp+nq)\right.\\\left. -{l}^{2}p
q-m_{1}^{2} \right) {x}^{2}+ \left(-cd(a+b)-ab(c+d)+s^{2}(pm(b+d)+nq(a+c))+k^{2}(np(a+d)\right.\\\left.+qm(b+c))+l^{2}pq(a+b)-2ks(lpq(m+n)+mt(p+q))+m_{1}^{2}(c+d)\right) x+npqm(s^{4}+k^{4})-s^{2}(nacq+bdpm+2lpqmt)-k^{2}(nadp+2lpqmt+bcqm)-2\,n{k}^
{2}pqm{s}^{2}-l^2(abpq-m_{1}^{2}pq)+2mkst(cq+dp)+2kpqsl(na+mb)-cdm_{1}^{2}+abcd.$	
\end{enumerate}  	
\end{theorem}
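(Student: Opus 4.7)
The plan is to exploit the SVD-compatibility of $M$ with the eigenstructure of $A$ and $B$ together with the structure of the all-ones vector in order to block-diagonalize $\mathcal{P}$. Specifically, I will decompose $\mathbb{R}^{m+n+p+q}$ into mutually orthogonal $\mathcal{P}$-invariant subspaces: (i) the span of $\{(0,0,Z_i,0)^T\}$ where $Z_i$ are eigenvectors of $C$ orthogonal to $\mathbf{1}_p$, (ii) the span of $\{(0,0,0,W_j)^T\}$ where $W_j$ are eigenvectors of $D$ orthogonal to $\mathbf{1}_q$, (iii) the span of $\{(X_i,0,0,0)^T : i>n\}$, (iv) the two-dimensional spaces $\operatorname{span}\{(X_i,0,0,0)^T,(0,Y_i,0,0)^T\}$ for $i=2,\ldots,n$, and (v) the four-dimensional ``radial'' subspace spanned by $u_1=\mathbf{1}_m/\sqrt{m}$, $u_2=\mathbf{1}_n/\sqrt{n}$, $u_3=\mathbf{1}_p/\sqrt{p}$, $u_4=\mathbf{1}_q/\sqrt{q}$ (viewed as vectors supported on the appropriate block).

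For the easy pieces, note that $J_{\cdot\times p}Z_i=0$ and $J_{\cdot\times q}W_j=0$ by orthogonality, so the vectors in (i), (ii) are immediately eigenvectors yielding $c_i$ and $d_j$. Since $M=UM'V^T$ with $M'$ of shape $m\times n$, we have $M^TX_i=0$ when $i>n$, and since $X_i\perp\mathbf{1}_m$ for $i\ge 2$, the $J$-blocks also annihilate $X_i$; thus $(X_i,0,0,0)^T$ is an eigenvector with eigenvalue $a_i$ for $i>n$. For (iv), using $MY_i=m_iX_i$ and $M^TX_i=m_iY_i$ (by the SVD) and the fact that $X_i\perp\mathbf{1}_m$, $Y_i\perp\mathbf{1}_n$ for $i\ge 2$, a vector $(\alpha X_i,\beta Y_i,0,0)^T$ is an eigenvector of $\mathcal{P}$ precisely when $(\alpha,\beta)^T$ is an eigenvector of $\bigl(\begin{smallmatrix}a_i&m_i\\ m_i&b_i\end{smallmatrix}\bigr)$, producing the two advertised eigenvalues.

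The remaining four eigenvalues come from restricting $\mathcal{P}$ to the radial subspace $V=\mathrm{span}\{u_1,u_2,u_3,u_4\}$. Invariance follows from three facts: $A\mathbf{1}_m=a\mathbf{1}_m$ and $B\mathbf{1}_n=b\mathbf{1}_n$ (since $A\in\mathcal{S}_m(a)$, $B\in\mathcal{S}_n(b)$); $C\mathbf{1}_p=c\mathbf{1}_p$, $D\mathbf{1}_q=d\mathbf{1}_q$; and the key relation $M^T\mathbf{1}_m=(tm/n)\mathbf{1}_n$, which follows because $\mathbf{1}_m=\sqrt{m}\,X_1$ and $\mathbf{1}_n=\sqrt{n}\,Y_1$ combined with $MY_1=m_1X_1$ (so $m_1=t\sqrt{m/n}$). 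Computing the matrix entries $u_i^T\mathcal{P}u_j$ gives the symmetric $4\times 4$ matrix
\[
\mathcal{Q}=\begin{pmatrix} a & m_1 & s\sqrt{mp} & k\sqrt{mq}\\ m_1 & b & k\sqrt{np} & s\sqrt{nq}\\ s\sqrt{mp} & k\sqrt{np} & c & l\sqrt{pq}\\ k\sqrt{mq} & s\sqrt{nq} & l\sqrt{pq} & d\end{pmatrix}.
\]
The four remaining eigenvalues are then the roots of $\det(xI-\mathcal{Q})$, and direct expansion (using Lemma \ref{det} to simplify, if desired) should reproduce the stated polynomial $f(x)$.

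The dimension count $(p-1)+(q-1)+(m-n)+2(n-1)+4=m+n+p+q$ confirms that we have accounted for every eigenvalue. I expect the only non-routine obstacle to be the bookkeeping in expanding $\det(xI-\mathcal{Q})$; in particular, the coefficient of $x$ and the constant term involve many cross terms, and the appearance of $m_1^2=t^2m/n$ must be tracked carefully so that, after clearing, terms like $m_1^2(c+d)$ and $-cdm_1^2$ match those in the stated $f(x)$. Verifying the $x^3$ and $x^2$ coefficients (sums of diagonal entries and of $2\times 2$ principal minors of $\mathcal{Q}$) provides a quick sanity check before completing the more delicate lower-order computations.
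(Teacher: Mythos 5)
Your proposal is correct, and it takes a genuinely different route from the paper. The paper works entirely with the characteristic polynomial: it conjugates $\mathcal{P}$ by the block-orthogonal matrix $\operatorname{diag}(U,V,P,Q)$ so that every $J$-block collapses to a multiple of the rank-one matrix $J'$ supported in the $(1,1)$ entry, then performs a Laplace expansion along the columns indexed by $c_2,\dots,c_p,d_2,\dots,d_q$ to extract those factors, and finally applies the Schur complement formula (Lemma \ref{det}) twice to peel off the remaining factors, with $f(x)$ emerging from the last determinant evaluation. You instead exhibit an explicit orthogonal decomposition of $\mathbb{R}^{m+n+p+q}$ into $\mathcal{P}$-invariant subspaces and read off eigenvalues from the restrictions; your dimension count $(p-1)+(q-1)+(m-n)+2(n-1)+4$ certifies completeness, and your $4\times 4$ matrix $\mathcal{Q}$ is exactly the symmetrization $\Delta R\Delta^{-1}$ (with $\Delta=\operatorname{diag}(\sqrt{m},\sqrt{n},\sqrt{p},\sqrt{q})$) of the quotient matrix $R$ that the paper only mentions in a remark after the fact — so your argument turns that remark into the actual proof of the fourth bullet. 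Your approach buys eigenvectors (not just eigenvalues), conceptual transparency, and freedom from determinant bookkeeping; the paper's approach delivers the factorization of $\det(xI-\mathcal{P})$ in one computation. Two small points to tighten: justify $X_1=\mathbf{1}_m/\sqrt{m}$ (it follows from $X_1=MY_1/m_1=t\mathbf{1}_m/(m_1\sqrt{n})$ together with $\|X_1\|=1$, which also gives $m_1=t\sqrt{m/n}$ and hence $M^T\mathbf{1}_m=(tm/n)\mathbf{1}_n$ — this is not automatic from $M\mathbf{1}_n=t\mathbf{1}_m$ alone), and you still owe the expansion of $\det(xI-\mathcal{Q})$ against the stated $f(x)$; that final verification is unavoidable in either approach and is the same routine computation the paper elides with ``upon evaluating the determinant.''
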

\begin{proof}
	Since $C$ and $D$ are regular real symmetric matrices, there exist orthogonal matrices $P$ and $Q$ such that $C=PC^{\prime}P^{T}$ and $D=QD^{\prime}Q^{T}$ where $C^{\prime}=diag(c_{1}=c,c_{2},\ldots,c_{p})$, $D^{\prime}=diag(d_{1}=d,d_{2},\ldots,d_{q})$, $Pe_{1,p}=\dfrac{1}{\sqrt{p}}\textbf{1}_{p}$ and $Qe_{1,q}=\dfrac{1}{\sqrt{q}}\textbf{1}_{q}$. By Definition \ref{def}, we get $M=UM^{\prime}V^{T}$, $A=UA^{\prime}U^{T}$ and $B=VB^{\prime}V^{T}$, where  $A^{\prime}=diag(a_{1},a_{2},\ldots,a_{m})$, $B^{\prime}=diag(b_{1}=b,b_{2},\ldots,b_{n})$. Since $M\textbf{1}_{n}=t\textbf{1}_{m}$, we must have, $M^{T}\textbf{1}_{m}=t_{1}\textbf{1}_{n}$ with $t_{1}=\dfrac{tm}{n}$ and also since $Y_{1}=\dfrac{1}{\sqrt{n}}\textbf{1}_{n}$, we get $X_{1}=\dfrac{MY_{1}}{m_{1}}=\dfrac{t}{m_{1}\sqrt{n}}\textbf{1}_{m}$.  Therefore, $M^{T}X_{1}=m_{1}Y_{1}$ implies  $m_{1}=\dfrac{t\sqrt{m}}{\sqrt{n}}$, and so $X_{1}=\dfrac{1}{\sqrt{m}}\textbf{1}_{m}$.\\
	Now, \begin{eqnarray*}\mathcal{P}&=&\left[\begin{array}{cccc}
	A&M&sJ_{m\times p }&kJ_{m\times q}\\
	M^{T}&B&kJ_{n\times p }&sJ_{n\times q}\\
	sJ_{p\times m }&kJ_{p\times n}&C&lJ_{p\times q}\\
	kJ_{q\times m }&sJ_{q\times n}&lJ_{q\times p}&D\\
\end{array}\right]\\[2mm]
&=&\left[\begin{array}{cccc}
	UA^{\prime}U^{T}&UM^{\prime}V^{T}&sJ_{m\times p }&kJ_{m\times q}\\
	V{M^{\prime}}^{T}U^{T}&VB^{\prime}V^{T}&kJ_{n\times p }&sJ_{n\times q}\\
	sJ_{p\times m }&kJ_{p\times n}&PC^{\prime}P^T&lJ_{p\times q}\\
	kJ_{q\times m }&sJ_{q\times n}&lJ_{q\times p}&QD^{\prime}Q^{T}\\
\end{array}\right]\\[2mm]
&=&\left[\begin{array}{cccc}
	U&0&0&0\\
	0&V&0&0\\
	0&0&P&0\\
	0&0&0&Q\\
\end{array}\right]\left[\begin{array}{cccc}
A^{\prime}&M^{\prime}&sU^{T}J_{m\times p }P&kU^{T}J_{m\times q}Q\\
{M^{\prime}}^{T}&B^{\prime}&kV^{T}J_{n\times p }P&sV^{T}J_{n\times q}Q\\
sP^{T}J_{p\times m }U&kP^{T}J_{p\times n}V&C^{\prime}&lP^{T}J_{p\times q}Q\\
kQ^{T}J_{q\times m }U&sQ^{T}J_{q\times n}V&lQ^{T}J_{q\times p}P&D^{\prime}\\
\end{array}\right]\\&&\left[\begin{array}{cccc}
U^{T}&0&0&0\\
0&V^{T}&0&0\\
0&0&P^{T}&0\\
0&0&0&Q^{T}\\
\end{array}\right]\\[2mm]
&=&\left[\begin{array}{cccc}
	U&0&0&0\\
	0&V&0&0\\
	0&0&P&0\\
	0&0&0&Q\\
\end{array}\right]\left[\begin{array}{cccc}
	A^{\prime}&M^{\prime}&s\sqrt{mp}J^{\prime}_{m\times p }&k\sqrt{mq}J^{\prime}_{m\times q}\\
	{M^{\prime}}^{T}&B^{\prime}&k\sqrt{np}J^{\prime}_{n\times p }&s\sqrt{nq}J^{\prime}_{n\times q}\\
	s\sqrt{mp}J^{\prime}_{p\times m}&k\sqrt{np}J^{\prime}_{p\times n}&C^{\prime}&l\sqrt{pq}J^{\prime}_{p\times q}\\
	k\sqrt{mq}J^{\prime}_{q\times m }&s\sqrt{nq}J^{\prime}_{q\times n}&l\sqrt{pq}J^{\prime}_{q\times p}&D^{\prime}\\
\end{array}\right]\\&&\left[\begin{array}{cccc}
	U^{T}&0&0&0\\
	0&V^{T}&0&0\\
	0&0&P^{T}&0\\
	0&0&0&Q^{T}\\
\end{array}\right].\\
\end{eqnarray*}	
Therefore,\\ $det(xI-\mathcal{P})=det\left[\begin{array}{cccc}
	xI-A^{\prime}&-M^{\prime}&-s\sqrt{mp}J^{\prime}_{m\times p }&-k\sqrt{mq}J^{\prime}_{m\times q}\\
	-{M^{\prime}}^{T}&xI-B^{\prime}&-k\sqrt{np}J^{\prime}_{n\times p }&-s\sqrt{nq}J^{\prime}_{n\times q}\\
	-s\sqrt{mp}J^{\prime}_{p\times m}&-k\sqrt{np}J^{\prime}_{p\times n}&xI-C^{\prime}&-l\sqrt{pq}J^{\prime}_{p\times q}\\
	-k\sqrt{mq}J^{\prime}_{q\times m }&-s\sqrt{nq}&-l\sqrt{pq}J^{\prime}_{q\times p}&xI-D^{\prime}\\
\end{array}\right].$\\[2mm]

\ni Expanding $det(XI-\mathcal{P})$ by Laplace's method \cite{horn} along the columns $(m+n+2),(m+n+3),\ldots,(m+n+p),(m+n+p+2),\ldots,(m+n+p+q)$, we get
$det(xI-\mathcal{P})=\\[2mm]\displaystyle\prod_{i=2}^{p}(x-c_{i})\displaystyle\prod_{j=2}^{q}(x-d_{i})det\left[\begin{array}{cccc}
	xI-A^{\prime}&-M^{\prime}&-s\sqrt{mp}e_{1,m}&-k\sqrt{mq}e_{1,m}\\
	-{M^{\prime}}^{T}&xI-B^{\prime}&-k\sqrt{np}e_{1,n}&-s\sqrt{nq}e_{1,n}\\
	-s\sqrt{mp}e^{T}_{1,m}&-k\sqrt{np}e^{T}_{1,n}&x-c&-l\sqrt{pq}\\
	-k\sqrt{mq}e^{T}_{1,m}&-s\sqrt{nq}e^{T}_{1,n}&-l\sqrt{pq}&x-d\\
\end{array}\right].$\\[2mm]
 Applying Lemma \ref{det} to the determinant on the right side of the above equation, we have
$det(xI-\mathcal{P})=\\[2mm]p_{0}(x)\displaystyle\prod_{i=2}^{p}(x-c_{i})\displaystyle\prod_{i=2}^{q}(x-d_{i})det\left[\begin{array}{cccc}
	xI-A^{\prime}-\dfrac{p_{1}(x)}{p_{0}(x)}J^{\prime}_{m\times m}&-M^{\prime}-\dfrac{p_{2}(x)}{p_{0}(x)}J^{\prime}_{m\times n}\\
	-{M^{\prime}}^{T}-\dfrac{p_{2}(x)}{p_{0}(x)}J^{\prime}_{n\times m}&xI-B^{\prime}-\dfrac{p_{3}(x)}{p_{0}(x)}J^{\prime}_{n\times n}
\end{array}\right],$\\[2mm]
where
$p_{0}(x)=(x-c)(x-d)-l^{2}pq$, $p_{1}(x)=m(s^2p(x-d)+2skpql+k^2q(x-c))$, $p_{2}(x)=\sqrt{mn}(skp(x-d)+s^{2}pql+k^{2}pql+ksq(x-c))$ and
$p_{3}(x)=n(k^{2}p(x-d)+2skpql+s^{2}q(x-c))$.\\[2mm]
Now, employing Lemma \ref{det}, we get
\begin{align*}det(xI-\mathcal{P})&=p_{0}(x)\Big(x-a_{1}-\dfrac{p_{1}(x)}{p_{0}(x)}\Big)\prod_{i=2}^{p}(x-c_{i})\prod_{i=2}^{q}(x-d_{i})\prod_{i=2}^{m}(x-a_{i})\\\times&det\Big[
xI-B^{\prime}-\dfrac{p_{3}(x)}{p_{0}(x)}J^{\prime}_{n\times n}-({M^{\prime}}^{T}+\dfrac{p_{2}(x)}{p_{0}(x)}J^{\prime}_{n\times m})(xI-A^{\prime}-\dfrac{p_{1}(x)}{p_{0}(x)}J^{\prime}_{m\times m})^{-1}\\&(M^{\prime}+\dfrac{p_{2}(x)}{p_{0}(x)}J^{\prime}_{m\times n})\Big]
\end{align*}
Upon evaluating the determinant on the right hand side of the above equation, we get\\
$det(xI-\mathcal{P})=f(x)\displaystyle\prod_{i=2}^{n}((x-a_{i})(x-b_{i})-m^{2}_{i})\displaystyle\prod_{i=2}^{p}(x-c_{i})\displaystyle\prod_{i=2}^{q}(x-d_{i})\displaystyle\prod_{i=n+1}^{m}(x-a_{i})$. This completes the proof. 
\end{proof}
\begin{rmk}
The quotient matrix \cite{Brouwer} of the partitioned matrix $\mathcal{P}$ is $\left[ \begin {array}{cccc} a&t&sp&kq\\ \noalign{\medskip}{\frac {tm}
	{n}}&b&kp&sq\\ \noalign{\medskip}sm&kn&c&lq\\ \noalign{\medskip}km&sn&
lp&d\end {array} \right]$. It can be verified that the polynomial $f(x)$ defined in Theorem {\rm\ref{thm}} is same the characteristic polynomial of the quotient matrix. 
\end{rmk}
\section{Distance spectra of some double join graph $S[G]^{H_{1}}_{H_{2}}\vee\{G^{\circ}_{1},G_{2}^{\bullet}\}$}
In this section we give the distance spectra of  $S[G]^{H_{1}}_{H_{2}}\vee\{G^{\circ}_{1},G_{2}^{\bullet}\}$ under some conditions on graphs $G$, $G_{1}$, $G_{2}$, $H_{1}$ and $H_{2}$. 
\begin{lemma}{\rm\cite{book}}
Let G be an r-regular graph with n vertices and m edges. Then\begin{enumerate}[(a)]
\item
$M(G)M^{T}(G)=L(G)+2I_{m}$ and $M^{T}(G)M(G)=A(G)+rI_{n}$.
\item
$Spec(L(G))=\{\lambda_{1}(G)+r-2,\lambda_{2}(G)+r-2,\ldots,\lambda_{n}(G)+r-2,0,0,\ldots ,0\}$.
\end{enumerate} 	
\end{lemma}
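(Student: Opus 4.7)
The plan is to prove (a) by a direct entrywise computation and then derive (b) from (a) via the standard identity comparing the spectra of $MM^T$ and $M^TM$.

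For part (a), I would compute $[M(G)M(G)^T]_{ef}=\sum_v M_{e,v}M_{f,v}$, which counts the vertices incident to both edges $e$ and $f$. This equals $2$ on the diagonal (every edge has two endpoints), equals $1$ whenever $e\neq f$ share a common vertex (i.e., $ef\in E(L(G))$), and equals $0$ otherwise. Hence $M(G)M(G)^T=A(L(G))+2I_m$. The companion expression $[M(G)^T M(G)]_{uv}=\sum_e M_{e,u}M_{e,v}$ counts edges incident to both $u$ and $v$; by $r$-regularity the diagonal equals $r$, while the off-diagonal entry is $1$ precisely when $uv\in E(G)$. This gives $M(G)^T M(G)=A(G)+rI_n$ and finishes (a).

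For part (b), I would invoke the standard polynomial identity $x^{m}\det(xI_n-M^TM)=x^{n}\det(xI_m-MM^T)$, which is a quick consequence of Lemma \ref{det} applied to the $(m+n)\times(m+n)$ block matrix whose diagonal blocks are $xI_m,\,xI_n$ and whose off-diagonal blocks are $M,\,M^T$. This identity implies that the eigenvalues of $M(G)M(G)^T$ are exactly those of $M(G)^T M(G)$ together with $m-n$ additional zero eigenvalues. Combining this with (a), the spectrum of the $m\times m$ matrix $L(G)+2I_m=M(G)M(G)^T$ is $\{\lambda_i(G)+r:i=1,\ldots,n\}$ enlarged by $m-n$ zeros; subtracting $2$ from every eigenvalue then produces the claimed spectrum of the line-graph adjacency matrix $L(G)$.

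The only step requiring any real care is the eigenvalue transfer between $MM^T$ and $M^T M$, since this is where the multiplicity $m-n$ of the additional eigenvalues is pinned down and where the $r$-regularity hypothesis implicitly fixes the range of $\lambda_i(G)+r$. Everything else reduces to routine combinatorial bookkeeping, and no results beyond Lemma \ref{det} and the definitions of $A(G)$, $A(L(G))$, and $M(G)$ are needed.
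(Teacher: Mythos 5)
The paper offers no proof of this lemma at all: it is quoted from the reference \cite{book} as a known fact, so there is no internal argument to compare yours against. Your proposal is the standard textbook proof and is essentially correct. Part (a) is the routine entrywise computation, and part (b) follows from the relation between the nonzero spectra of $MM^{T}$ and $M^{T}M$, which you correctly reduce to the Schur-complement formula (Lemma \ref{det}) applied to the block matrix $\left[\begin{smallmatrix} xI_{m} & M\\ M^{T} & xI_{n}\end{smallmatrix}\right]$; you should just note in passing that the resulting identity is first obtained for $x\neq 0$ and extends to all $x$ because both sides are polynomials.

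One point needs correction. Your computation gives $Spec(MM^{T})=\{\lambda_{i}(G)+r\}_{i=1}^{n}\cup\{0\ \text{with multiplicity}\ m-n\}$, so after subtracting $2$ the trailing eigenvalues of the line-graph adjacency matrix are $-2$, each with multiplicity $m-n$ --- not $0$ as the statement prints. Your closing claim that this ``produces the claimed spectrum'' is therefore not literally true; what your (correct) argument actually establishes is that the statement as printed contains a typo, and the $m-n$ trailing entries should read $-2,\ldots,-2$. This is the form in which the lemma is actually used later in the paper: for instance, the eigenvalues $-(2+\lambda_{i}(H))$ for $i=n+1,\ldots,m$ with $H=L(G)$ evaluate to $0$ precisely because $\lambda_{i}(L(G))=-2$ in that range. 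You should also make explicit the standing assumption $m\ge n$ (automatic for a connected $r$-regular graph with $r\ge 2$), since that is what places the extra zero eigenvalues on the $MM^{T}$ side rather than the $M^{T}M$ side.
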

\begin{rmk}\label{rmk}
Let G be a regular graph on n vertices. If $M(G)=U\sum V^{T}$ is a singular value decomposition of $M(G)$. Then the columns of the matrix $U$ (resp. V) forms an orthonormal set of eigenvectors of the matrix $aJ_{m}+bI_{m}+cL(G)$ (resp. $aJ_{m}+bI_{m}+cA(G)$ ) for any constants $a$, $b$ and $c$. Also, we can assume that $Ve_{1,n}=\dfrac{1_{n}}{\sqrt{n}}$. Further, it may be noted that $M(G)1_{n}=21_{m}$.
\end{rmk}
\ni The following theorem gives the distance spectrum of $[S(G)]^{\overline{K_{m}}}_{\overline{K_{n}}}\vee \{G^{\circ}_{1},G^{\bullet}_{2}\}$ when $G_{1}$ and $G_{2}$ are regular graphs. 
\begin{theorem}\label{thm1}
Let G be an r-regular graph with n vertices and m edges.  Let $G_{1}$ be an $r_{1}$-regular graph of order p and let $G_{2}$ be an $r_{2}$-regular graph of order q, respectively. Then the distance spectrum of the double join graph $[S(G)]^{\overline{K_{m}}}_{\overline{K_{n}}}\vee \{G^{\circ}_{1},G^{\bullet}_{2}\}$ consists of:\begin{enumerate}
	\item
	 $-(\lambda_{i}(G_{1})+2)$ for $i=2,3,\ldots,p$ and $ d_{i}= -(\lambda_{i}(G_{2})+2)$ for $i=2,3,\ldots,q$;
	 \item
	{\rm-2} with multiplicity m-n;
	\item
	$-2\pm\sqrt{\lambda_{i}(G)+r}$ for $i=2,3,\ldots,n$;
	\item
	The four eigenvalues of the matrix 
	{\scriptsize$\left[ \begin {array}{cccc} 2(m-1)&3n-4&p&2q\\ \noalign{\medskip}3m-2r&2(n-1)&2p&q\\ \noalign{\medskip}m&2n&2(p-1)-r_{1}&3q\\ \noalign{\medskip}2m&n&
	3p&2(q-1)-r_{2}\end {array} \right]$.}
	\end{enumerate} 
\end{theorem}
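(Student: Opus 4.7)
The plan is to realise the distance matrix $\mathcal{D}$ of $[S(G)]^{\overline{K_m}}_{\overline{K_n}}\vee\{G_1^{\circ},G_2^{\bullet}\}$ as an instance of the block matrix $\mathcal{P}$ of Definition~\ref{def} and then invoke Theorem~\ref{thm} directly. The work splits into three steps: (i) write $\mathcal{D}$ in $4\times 4$ block form with respect to the partition (subdivision vertices, original vertices, $V(G_1)$, $V(G_2)$); (ii) verify the SVD-compatibility hypothesis (a) of Definition~\ref{def}; (iii) substitute the spectra of the diagonal blocks into Theorem~\ref{thm}.

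A short case analysis shows that the graph has diameter $3$: distinct subdivision vertices are at distance $2$ (via any vertex of $G_1$); distinct original vertices are at distance $2$ (via $G_2$); a subdivision vertex $e_i$ and an original vertex $v_j$ are at distance $1$ if $v_j$ is incident to $e_i$ in $G$ and at distance $3$ otherwise (one checks directly that the neighbourhoods of $e_i$ and a non-incident $v_j$ are disjoint); subdivision-to-$V(G_1)$ and original-to-$V(G_2)$ distances are all $1$; subdivision-to-$V(G_2)$ and original-to-$V(G_1)$ distances are all $2$; any $V(G_1)$-$V(G_2)$ distance is $3$; and within $V(G_i)$ the distances are $1$ or $2$ according to adjacency in $G_i$. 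Assembling the blocks gives exactly $\mathcal{D}=\mathcal{P}[A,B,C,D,M,1,2,3]$ with $A=2(J_m-I_m)$, $B=2(J_n-I_n)$, $C=2(J_p-I_p)-A(G_1)$, $D=2(J_q-I_q)-A(G_2)$ and $M=3J_{m\times n}-2M(G)$.

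For condition (a), I would write the SVD $M(G)=U\Sigma V^T$ normalised as in Remark~\ref{rmk} (so that $Ve_{1,n}=\mathbf{1}_n/\sqrt n$ and $M(G)\mathbf{1}_n=2\mathbf{1}_m$). Since $A$ and $B$ are linear combinations of $J$ and $I$ alone, Remark~\ref{rmk} gives that the columns of $U$ and of $V$ are orthonormal eigenbases of $A$ and $B$ respectively; and since $J_{m\times n}=\sqrt{mn}\,Ue_{1,m}e_{1,n}^TV^T$, the same unitaries provide $M=U\bigl(3\sqrt{mn}\,J'_{m\times n}-2\Sigma\bigr)V^T$, so condition (a) holds with $t=3n-4$ and $m_i^2=4\sigma_i(M(G))^2=4(\lambda_i(G)+r)$ for $i\ge 2$ (using $M(G)^TM(G)=A(G)+rI_n$). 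The diagonal blocks have transparent spectra: $A$ has $2(m-1)$ together with $-2$ of multiplicity $m-1$, $B$ has $2(n-1)$ together with $-2$ of multiplicity $n-1$, $C$ has $2(p-1)-r_1$ and $-2-\lambda_i(G_1)$ for $i\ge 2$, and similarly for $D$. Feeding all of this into Theorem~\ref{thm} produces items~(1)--(3) directly, and the remark following Theorem~\ref{thm} identifies the quartic $f(x)$ with the characteristic polynomial of the quotient matrix, which in our parameters is exactly the $4\times 4$ matrix in item~(4).

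The main obstacle is step~(ii): exhibiting an SVD of $M$ whose singular vectors simultaneously diagonalise both $A$ and $B$. Remark~\ref{rmk} makes this painless here because $A$ and $B$ are polynomials in $J$ and $I$ only, but it is exactly the step that must be re-examined in the subsequent theorems of Section~3 where $H_1$ and $H_2$ are non-trivial and the diagonal blocks acquire genuine $L(G)$- or $A(G)$-contributions.
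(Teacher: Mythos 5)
Your proposal is correct and follows essentially the same route as the paper: write $\mathcal{D}$ in the same $4\times 4$ block form, identify it as $\mathcal{P}[A,B,C,D,M,1,2,3]$ with identical choices of $A,B,C,D,M$, justify hypothesis (a) via Remark~\ref{rmk}, and substitute the block spectra into Theorem~\ref{thm}; you merely make explicit the distance case-analysis and the SVD verification (the values $t=3n-4$ and $m_i^2=4(\lambda_i(G)+r)$) that the paper leaves implicit. Incidentally, feeding your $m_i^2=4(\lambda_i(G)+r)$ with $a_i=b_i=-2$ into item (3) of Theorem~\ref{thm} yields $-2\pm 2\sqrt{\lambda_i(G)+r}$, which indicates that a factor of $2$ is missing from item (3) of the theorem as printed.
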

\begin{proof}
 The distance matrix $\mathcal{D}$  of $[S(G)]^{\overline{K_{m}}}_{\overline{K_{n}}}\vee \{G^{\circ}_{1},G^{\bullet}_{2}\}$ is\\ \\
{\scriptsize$$\left[\begin{array}{cccc}
	2(J_{m\times m}-I_{m})&3J_{m\times n}-2M(G)&J_{m\times p }&2J_{m\times q}\\[2mm]
	3J_{n\times m}-2M^{T}(G)&2(J_{n\times n}-I_{n})&2J_{n\times p }&J_{n\times q}\\[2mm]
	J_{p\times m }&2J_{p\times n}&2(J_{p\times p}-I_{p})-A(G_{1})&3J_{p\times q}\\[2mm]
	2J_{q\times m }&J_{q\times n}&3J_{q\times p}&2(J_{q\times q}-I_{q})-A(G_{2})\\
\end{array}\right].$$}
Let $A=2(J_{m\times m}-I_{m})$, $B=2(J_{n\times n}-I_{n}),$ $C=2(J_{p\times p}-I_{p})-A(G_{1})$, $D=2(J_{q\times q}-I_{q})-A(G_{2})$, $M=3J_{m\times n}-2M(G),$ $s=1$, $k=2$ and $l=3$. Then by Remark \ref{rmk}, $\mathcal{D}=\mathcal{P}[A,B,C,D,M,s,k,l]$. Further, the eigenvalues of $A$, $B$, $C$ and $D$ are respectively,
 \begin{enumerate}
 	\item 
 	$a_{1}=2m-2$, $a_{i}=-2$ for $i=2,3,\ldots,m$.
\item
$ b_{1}=2n-2$, $b_{i}=-2$ for $i=2,3,\ldots,n$.
\item
$ c_{1}=2p-r_{1}-2$, $c_{i}=-(\lambda_{i}(G_{1})+2)$ for $i=2,3,\ldots,p$.
\item
$ d_{1}=2q-r_{2}-2$, $d_{i}=-(\lambda_{i}(G_{2})+2)$ for $i=2,3,\ldots,q$.
 \end{enumerate}
Plugging these values in Theorem \ref{thm}, we obtain the required result. 
\end{proof}
\begin{theorem}
	Let G be an r-regular triangle free graph with n vertices and m edges.  Let $G_{1}$ be an $r_{1}$-regular graph of order p and let $G_{2}$ be an $r_{2}$-regular graph of order q, respectively. If $H$ is an t-regular graph belonging to the set $\{\overline{K_{m}},L(G), \overline{L(G)}\}$, then the distance spectrum of the double join graph $[S(G)]^{H}_{\overline{G}}\vee \{G^{\circ}_{1},G^{\bullet}_{2}\}$ consists of:\begin{enumerate}
		\item
		$-(\lambda_{i}(G_{1})+2)$ for $i=2,3,\ldots,p$ and $ d_{i}= -(\lambda_{i}(G_{2})+2)$ for $i=2,3,\ldots,q$;
		\item
		{\rm$-(2+\lambda_{i}(H)$)} for $i=n+1,n+2,\ldots,m$;
		\item
		$\dfrac{1}{2}(l-k-3\pm\sqrt{(\lambda_{i}(G)+\lambda_{i}(H))^2+2\lambda_{i}(H)+6\lambda_{i}(G)+4r+1})$ for $i=2,3,\ldots,n$;
		\item
		The four eigenvalues of the matrix 
		{\scriptsize$\left[ \begin {array}{cccc} 2(m-1)-t&2(n-1)&p&2q\\ \noalign{\medskip}2m-r&n+r-1&2p&q\\ \noalign{\medskip}m&2n&2(p-1)-r_{1}&3q\\ \noalign{\medskip}2m&n&
			3p&2(q-1)-r_{2}\end {array} \right].$}
	\end{enumerate} 
\end{theorem}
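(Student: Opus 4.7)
The approach parallels the proof of Theorem \ref{thm1}, the difference being that the extra structure coming from $H$ and $\overline{G}$ modifies the diagonal blocks $A$, $B$ and the off-diagonal block $M$ of the distance matrix. First I would label the vertex set of $[S(G)]^{H}_{\overline{G}}\vee\{G_1^\circ, G_2^\bullet\}$ as a disjoint union of four classes of sizes $m$, $n$, $p$, $q$ (the new subdivision vertices, the original $G$-vertices, the $G_1$-vertices, and the $G_2$-vertices) and write the distance matrix $\mathcal{D}$ in block form with respect to this partition, so as to identify it as a matrix of the type $\mathcal{P}$ from Definition \ref{def}.

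The core of the argument is a careful case analysis of pairwise distances. Two new vertices are either adjacent in $H$ (distance $1$) or connected via any common $G_1$-neighbor (distance $2$), giving $A = 2(J_m - I_m) - A(H)$. Two original vertices are either adjacent in $\overline{G}$ (distance $1$) or adjacent in $G$, in which case the corresponding new subdivision vertex yields distance $2$; this produces $B = (J_n - I_n) + A(G)$. The delicate block is $M$: the distance between a new vertex $e_i$ and an original vertex $v_j$ is $1$ when $v_j$ is an endpoint of $e_i$, and otherwise triangle-freeness of $G$ guarantees that at least one endpoint of $e_i$ is non-adjacent to $v_j$ in $G$, so the added $\overline{G}$-edge gives a path of length $2$; this forces $M = 2J_{m\times n} - M(G)$. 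The blocks involving $G_1$ and $G_2$ are identical to those computed in Theorem \ref{thm1}, so $s = 1$, $k = 2$, $l = 3$, and $C$, $D$ are as before.

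Next I would verify the hypotheses of Definition \ref{def}. By Remark \ref{rmk}, the columns of the factor $U$ in any singular value decomposition $M(G) = U\Sigma V^T$ are simultaneous eigenvectors of every matrix of the form $\alpha J_m + \beta I_m + \gamma A(L(G))$; for each of the three choices $H \in \{\overline{K_m},\, L(G),\, \overline{L(G)}\}$ the matrix $A(H)$ is expressible in this form, so $A$ has the $X_i$'s as eigenvectors. Similarly, $B = (J_n - I_n) + A(G)$ has the columns of $V$ as eigenvectors. A direct rewriting using $J_{m\times n} = \sqrt{mn}\, X_1 Y_1^T$ shows that $M = U\widetilde{\Sigma}V^T$ with first singular value $m_1 = 2(n-1)\sqrt{m/n}$ and remaining singular values satisfying $m_i^2 = \lambda_i(G) + r$ (using $M(G)^T M(G) = A(G) + rI_n$); one also checks that $M\mathbf{1}_n = t\mathbf{1}_m$ with $t = 2(n-1)$.

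With all the ingredients in place, the spectra $a_1 = 2(m-1) - t$, $a_i = -2 - \lambda_i(H)$ for $i \ge 2$, $b_1 = n + r - 1$, $b_i = \lambda_i(G) - 1$ for $i \ge 2$, together with the spectra of $C$ and $D$ exactly as in Theorem \ref{thm1}, are substituted into Theorem \ref{thm}. Items (1)--(3) of the statement follow immediately; the four remaining eigenvalues are the roots of the quartic $f(x)$ of Theorem \ref{thm}, which by the Remark following that theorem coincide with the eigenvalues of the $4\times 4$ quotient matrix, and this matrix becomes the one displayed in item (4) after substituting the explicit values of $a, b, c, d, t, s, k, l$. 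The main obstacle I anticipate is the identification of the $M$-block, where the use of triangle-freeness is essential to rule out distance-$3$ contributions that would break the clean $2J - M(G)$ factorization.
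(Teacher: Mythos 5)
Your proposal is correct and follows essentially the same route as the paper: identify the distance matrix of $[S(G)]^{H}_{\overline{G}}\vee \{G^{\circ}_{1},G^{\bullet}_{2}\}$ as the partitioned matrix $\mathcal{P}[A,B,C,D,M,1,2,3]$ with $A=2(J_m-I_m)-A(H)$, $B=A(G)+J_n-I_n$, $M=2J_{m\times n}-M(G)$, verify the hypotheses of Definition \ref{def} via Remark \ref{rmk}, and substitute the spectra of $A,B,C,D$ into Theorem \ref{thm}. You actually supply more detail than the paper does (the explicit distance case analysis, in particular the use of triangle-freeness for the $M$-block, and the computation of $t$ and the singular values $m_i$), all of which checks out.
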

\begin{proof}
	The distance matrix $\mathcal{D}$  of $[S(G)]^{H}_{\overline{G}}\vee \{G^{\circ}_{1},G^{\bullet}_{2}\}$ is\\ \\
	{\scriptsize$$\left[\begin{array}{cccc}
			2(J_{m\times m}-I_{m})-A(H)&2J_{m\times n}-M(G)&J_{m\times p }&2J_{m\times q}\\[2mm]
			2J_{n\times m}-M^{T}(G)&A(G)+J_{n\times n}-I_{n}&2J_{n\times p }&J_{n\times q}\\[2mm]
			J_{p\times m }&2J_{p\times n}&2(J_{p\times p}-I_{p})-A(G_{1})&3J_{p\times q}\\[2mm]
			2J_{q\times m }&J_{q\times n}&3J_{q\times p}&2(J_{q\times q}-I_{q})-A(G_{2})\\
		\end{array}\right].$$}
	Let $A=2(J_{m\times m}-I_{m})-A(H)$, $B=A(G)+J_{n\times n}-I_{n},$ $C=2(J_{p\times p}-I_{p})-A(G_{1})$, $D=2(J_{q\times q}-I_{q})-A(G_{2})$, $M=2J_{m\times n}-M(G),$ $s=1$, $k=2$ and $l=3$. Then by Remark \ref{rmk}, $\mathcal{D}=\mathcal{P}[A,B,C,D,M,s,k,l]$. Further, the eigenvalues of $A$, $B$, $C$ and $D$ are respectively,
	\begin{enumerate}
		\item 
		$a_{1}=2m-2-t$, $a_{i}=-(2+\lambda_{i}(H))$ for $i=2,3,\ldots,m$.
		\item
		$ b_{1}=n+r-1$, $b_{i}=\lambda_{i}(G)-1$ for $i=2,3,\ldots,n$.
		\item
		$ c_{1}=2p-r_{1}-2$, $c_{i}=-(\lambda_{i}(G_{1})+2)$ for $i=2,3,\ldots,p$.
		\item
		$ d_{1}=2q-r_{2}-2$, $d_{i}=-(\lambda_{i}(G_{2})+2)$ for $i=2,3,\ldots,q$.
	\end{enumerate}
	Plugging these values in Theorem \ref{thm}, we obtain the required result. 
\end{proof}

\begin{theorem}
	Let G be an r-regular graph with n vertices and m edges. Let $H$ be an t- regular graph belonging to the set $\{\overline{K_{n}}, K_{n}, G, \overline{G}\}$. If $G_{1}$ is an $r_{1}$-regular graph of order p and $G_{2}$ is an $r_{2}$-regular graph of order q, respectively. Then the distance spectrum of the double join graph $[S(G)]^{K_{m}}_{H}\vee \{G^{\circ}_{1},G^{\bullet}_{2}\}$ consists of:\begin{enumerate}
		\item
		$-(\lambda_{i}(G_{1})+2)$ for $i=2,3,\ldots,p$ and $ d_{i}= -(\lambda_{i}(G_{2})+2)$ for $i=2,3,\ldots,q$;
		\item
		{\rm-1} with multiplicity m-n;
		\item
		$\dfrac{1}{2}(-3-\lambda_{i}(H)\pm\sqrt {(\lambda_{i}(H)+1)^{2}+4\,\lambda_{i}(G)+4\,r})$ for $i=2,3,\ldots,n$;
		\item
		The four eigenvalues of the matrix 
		{\scriptsize$\left[ \begin {array}{cccc} m-1&2(n-1)&p&2q\\ \noalign{\medskip}{
				2m-r}&2(n-1)-t&2p&q\\ \noalign{\medskip}m&2n&2(p-1)-r_{1}&3q\\ \noalign{\medskip}2m&n&
			3p&2(q-1)-r_{2}\end {array} \right]$}
	\end{enumerate} 
\end{theorem}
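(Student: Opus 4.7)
The plan is to follow the template of Theorem \ref{thm1} and the theorem immediately preceding: write the distance matrix $\mathcal{D}$ of $[S(G)]^{K_m}_H \vee \{G_1^\circ, G_2^\bullet\}$ in $4 \times 4$ block form, recognize it as an instance of $\mathcal{P}[A,B,C,D,M,s,k,l]$ from Definition \ref{def}, and read off the spectrum from Theorem \ref{thm}. First I would tabulate the distances. Because $H_1 = K_m$, any two edge-vertices are at distance $1$, so the upper-left block is $J_{m \times m} - I_m$. Two original vertices are at distance $1$ if adjacent in $H$ and at distance $2$ otherwise (via any vertex of $G_2$, or via any common edge-vertex), so the $(2,2)$ block is $2(J_{n\times n} - I_n) - A(H)$. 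The remaining blocks are $(1,2) = 2J_{m\times n} - M(G)$, $(1,3) = J_{m\times p}$, $(1,4) = 2J_{m\times q}$, $(2,3) = 2J_{n\times p}$, $(2,4) = J_{n\times q}$, $(3,3) = 2(J_{p\times p} - I_p) - A(G_1)$, $(4,4) = 2(J_{q\times q} - I_q) - A(G_2)$, and $(3,4) = 3J_{p\times q}$, the last reflecting the shortest $G_1$-to-$G_2$ path $G_1 \to e_i \to v_j \to G_2$ (which also shows the graph has diameter $3$).

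I would then set $A = J_m - I_m$, $B = 2(J_n - I_n) - A(H)$, $C = 2(J_p - I_p) - A(G_1)$, $D = 2(J_q - I_q) - A(G_2)$, $M = 2J_{m\times n} - M(G)$, $s = 1$, $k = 2$, $l = 3$, so that $\mathcal{D} = \mathcal{P}[A,B,C,D,M,s,k,l]$. Condition (a) of Definition \ref{def} is verified using Remark \ref{rmk}: the orthogonal matrix $U$ in the SVD of $M(G)$ diagonalizes any matrix of the form $aJ_m + bI_m + cL(G)$ (hence $A$, with $c=0$), and $V$ diagonalizes any matrix of the form $aJ_n + bI_n + cA(G)$. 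For each admissible choice $H \in \{\overline{K_n}, K_n, G, \overline{G}\}$, $A(H)$ is a polynomial in $J_n, I_n, A(G)$ (explicitly $0$, $J-I$, $A(G)$, or $J-I-A(G)$), so $B$ has the required form. Moreover $M = 2\sqrt{mn}\, X_1 Y_1^T - U\Sigma V^T = U\bigl(2\sqrt{mn}\, e_{1,m} e_{1,n}^T - \Sigma\bigr) V^T$ displays the desired diagonal SVD structure, and $M\mathbf{1}_n = (2n-2)\mathbf{1}_m$ supplies the remaining scalar of Definition \ref{def}.

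Listing the eigenvalues $a_1 = m-1$, $a_i = -1$ for $i \geq 2$; $b_1 = 2(n-1) - t$ (with $t$ the regularity of $H$), $b_i = -(2 + \lambda_i(H))$ for $i \geq 2$; $c_1 = 2p - r_1 - 2$, $c_i = -(\lambda_i(G_1) + 2)$; $d_1 = 2q - r_2 - 2$, $d_i = -(\lambda_i(G_2) + 2)$; and $m_i^2 = \lambda_i(G) + r$ for $i \geq 2$ (since $M^T(G)M(G) = A(G) + rI_n$), substitution into Theorem \ref{thm} produces items $(1)$--$(3)$ directly. In particular item $(3)$ reduces from $\tfrac{1}{2}\bigl((a_i+b_i) \pm \sqrt{(a_i-b_i)^2 + 4m_i^2}\bigr)$ to the stated $\tfrac{1}{2}(-3-\lambda_i(H) \pm \sqrt{(\lambda_i(H)+1)^2 + 4\lambda_i(G) + 4r})$. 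The remaining four eigenvalues are the roots of $f(x)$, which by the remark after Theorem \ref{thm} coincide with the spectrum of the quotient matrix; using the identity $2m = rn$ to rewrite the $(2,1)$-entry $(2n-2)m/n$ as $2m - r$ recovers item $(4)$.

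The main obstacle is the simultaneous-diagonalization check in the template-matching step: one needs $A(H)$ to lie in the algebra generated by $I_n$, $J_n$, and $A(G)$ so that $V$ is a common eigenbasis for both $B$ and the SVD of $M$. This algebraic restriction is exactly what forces $H$ to the specified four-element set and prevents the theorem from being stated for arbitrary regular $H$.
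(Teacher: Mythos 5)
Your proposal is correct and follows essentially the same route as the paper: the same block form of the distance matrix, the same identification $A=J_m-I_m$, $B=2(J_n-I_n)-A(H)$, $M=2J_{m\times n}-M(G)$, $s=1$, $k=2$, $l=3$, the same eigenvalue lists, and the same substitution into Theorem \ref{thm}. Your added verifications (the explicit SVD structure of $M$, the computation $m_i^2=\lambda_i(G)+r$, and the observation that the restriction on $H$ comes from needing $A(H)$ in the algebra generated by $I_n$, $J_n$, $A(G)$) are all consistent with, and somewhat more explicit than, what the paper leaves to Remark \ref{rmk}.
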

\begin{proof}
	The distance matrix $\mathcal{D}$  of $[S(G)]^{K_{m}}_{H}\vee \{G^{\circ}_{1},G^{\bullet}_{2}\}$ is\\ \\
	{\scriptsize$$\left[\begin{array}{cccc}
			J_{m\times m}-I_{m}&2J_{m\times n}-M(G)&J_{m\times p }&2J_{m\times q}\\[2mm]
			2J_{n\times m}-M^{T}(G)&2(J_{n\times n}-I_{n})-A(H)&2J_{n\times p }&J_{n\times q}\\[2mm]
			J_{p\times m }&2J_{p\times n}&2(J_{p\times p}-I_{p})-A(G_{1})&3J_{p\times q}\\[2mm]
			2J_{q\times m }&J_{q\times n}&3J_{q\times p}&2(J_{q\times q}-I_{q})-A(G_{2})\\
		\end{array}\right].$$}
	Let $A=J_{m\times m}-I_{m}$, $B=2(J_{n\times n}-I_{n})-A(H),$ $C=2(J_{p\times p}-I_{p})-A(G_{1})$, $D=2(J_{q\times q}-I_{q})-A(G_{2})$, $M=2J_{m\times n}-M(G),$ $s=1$, $k=2$ and $l=3$. Then by Remark \ref{rmk}, $\mathcal{D}=\mathcal{P}[A,B,C,D,M,s,k,l]$. Further, the eigenvalues of $A$, $B$, $C$ and $D$ are respectively,
	\begin{enumerate}
		\item 
		$a_{1}=m-1$, $a_{i}=-1$ for $i=2,3,\ldots,m$.
		\item
		$ b_{1}=2n-2-t$, $b_{i}=-(\lambda_{i}(H)+2)$ for $i=2,3,\ldots,n$.
		\item
		$ c_{1}=2p-r_{1}-2$, $c_{i}=-(\lambda_{i}(G_{1})+2)$ for $i=2,3,\ldots,p$.
		\item
		$ d_{1}=2q-r_{2}-2$, $d_{i}=-(\lambda_{i}(G_{2})+2)$ for $i=2,3,\ldots,q$.
	\end{enumerate}
	Plugging these values in Theorem \ref{thm}, we obtain the required result. 
\end{proof}
\begin{theorem}
	Let G be an r-regular graph with n vertices and m edges.  Let $G_{1}$ be an $r_{1}$-regular graph of order p and let $G_{2}$ be an $r_{2}$-regular graph of order q, respectively. If $H$ is an t-regular graph belonging to the set $\{\overline{K_{m}},L(G), \overline{L(G)}\}$, then the distance spectrum of the double join graph $[S(G)]^{H}_{K_{n}}\vee \{G^{\circ}_{1},G^{\bullet}_{2}\}$ consists of:\begin{enumerate}
		\item
		$-(\lambda_{i}(G_{1})+2)$ for $i=2,3,\ldots,p$ and $ d_{i}= -(\lambda_{i}(G_{2})+2)$ for $i=2,3,\ldots,q$;
		\item
		{\rm$-(2+\lambda_{i}(H)$)} for $i=n+1,n+2,\ldots,m$;
		\item
		$\dfrac{1}{2}(-3-\lambda_{i}(H)\pm\sqrt {({\lambda_{i}(H)}+1)^{2}+4\,\lambda_{i}(G)+4\,r})$ for $i=2,3,\ldots,n$;
		\item
		The four eigenvalues of the matrix 
		{\scriptsize$\left[ \begin {array}{cccc} 2(m-1)-t&2(n-1)&p&2q\\ \noalign{\medskip}2m-r&n-1&2p&q\\ \noalign{\medskip}m&2n&2(p-1)-r_{1}&3q\\ \noalign{\medskip}2m&n&
			3p&2(q-1)-r_{2}\end {array} \right].$}
	\end{enumerate} 
\end{theorem}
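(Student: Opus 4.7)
The plan is to follow the template established by the three preceding theorems in this section: compute the distance matrix, identify its block structure with that of $\mathcal{P}[A,B,C,D,M,s,k,l]$ from Definition~\ref{def}, and then invoke Theorem~\ref{thm}. First I would record the distances in $[S(G)]^{H}_{K_{n}}\vee\{G_{1}^{\circ},G_{2}^{\bullet}\}$ block by block. Because $H_{2}=K_{n}$, every two distinct original vertices of $G$ are at distance $1$, so the $(v,v)$ block is $J_{n\times n}-I_{n}$. An original vertex and a new vertex are at distance $1$ if incident and $2$ otherwise (through the clique), giving the $(v,e)$ block $2J_{n\times m}-M^{T}(G)$. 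Two new vertices are at distance $1$ if adjacent in $H$ and $2$ otherwise (routed via any vertex of $G_{1}$), producing $2(J_{m\times m}-I_{m})-A(H)$. The blocks involving $G_{1}$ and $G_{2}$ are forced by the join structure exactly as in the preceding theorems; in particular the block between $G_{1}$ and $G_{2}$ is $3J_{p\times q}$, because the shortest such path must traverse $G_{1}\to e\to v\to G_{2}$.

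Next I would set $A=2(J_{m\times m}-I_{m})-A(H)$, $B=J_{n\times n}-I_{n}$, $C=2(J_{p\times p}-I_{p})-A(G_{1})$, $D=2(J_{q\times q}-I_{q})-A(G_{2})$, $M=2J_{m\times n}-M(G)$, $s=1$, $k=2$, $l=3$, and verify condition~(a) of Definition~\ref{def}. For each $H\in\{\overline{K_{m}},L(G),\overline{L(G)}\}$, $A(H)$ is a linear combination of $I_{m}$, $J_{m}$, and $M(G)M^{T}(G)$, so by Remark~\ref{rmk} the left singular vectors $U$ of $M(G)$ diagonalize $A$; similarly $V$ diagonalizes $B$. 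Since $M=2J-M(G)$ differs from $-M(G)$ only by the rank-one piece $2\,\mathbf{1}_{m}\mathbf{1}_{n}^{T}$, it admits an SVD sharing the same $U,V$ (up to sign): the first singular value becomes $(2n-2)\sqrt{m/n}$, while $m_{i}^{2}=\lambda_{i}(G)+r$ for $i\ge 2$ by the identity $M(G)^{T}M(G)=A(G)+rI_{n}$.

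Finally I would enumerate the eigenvalues of $A,B,C,D$ on the eigenbases $\{X_{i}\}$ and $\{Y_{j}\}$. Since $\mathbf{1}$ is a common top eigenvector of $J$, $I$, $A(H)$, $A(G_{1})$, $A(G_{2})$, one reads off $a_{1}=2m-2-t$ and $a_{i}=-(2+\lambda_{i}(H))$ for $i\ge 2$; $b_{1}=n-1$ and $b_{i}=-1$ for $i\ge 2$; $c_{1}=2p-2-r_{1}$ and $c_{i}=-(2+\lambda_{i}(G_{1}))$ for $i\ge 2$; $d_{1}=2q-2-r_{2}$ and $d_{i}=-(2+\lambda_{i}(G_{2}))$ for $i\ge 2$. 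Feeding these into Theorem~\ref{thm} immediately yields items~(1)--(3) of the statement, and item~(4) follows from the remark after Theorem~\ref{thm} identifying $f(x)$ with the characteristic polynomial of the quotient matrix, the $(2,1)$ entry $(2n-2)m/n$ simplifying to $2m-r$ via the handshake identity $2m=rn$. The only point requiring care (not a deep obstacle) is the overloading of the symbol $t$: it denotes both the regularity of $H$ in the theorem statement and the row-sum constant of $M$ in Definition~\ref{def}, which in our setting equals $2n-2$; these two roles must be kept rigorously separate when plugging into the formulas of Theorem~\ref{thm}.
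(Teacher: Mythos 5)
Your proposal is correct and follows essentially the same route as the paper's own proof: write down the distance matrix block by block, identify it with $\mathcal{P}[A,B,C,D,M,1,2,3]$ from Definition~\ref{def} via Remark~\ref{rmk}, list the eigenvalues of $A$, $B$, $C$, $D$, and plug into Theorem~\ref{thm}. Your identification $B=J_{n\times n}-I_{n}$ is in fact the correct one (the paper's proof text misstates $B$ as $A(G)+J_{n\times n}-I_{n}$ while nonetheless using the eigenvalues $n-1$ and $-1$ of $J_{n\times n}-I_{n}$), and your explicit checks on the singular values of $2J_{m\times n}-M(G)$, the simplification $(2n-2)m/n=2m-r$, and the two distinct roles of the symbol $t$ are precisely the details the paper leaves implicit.
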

\begin{proof}
	The distance matrix $\mathcal{D}$  of $[S(G)]^{H}_{K_{n}}\vee \{G^{\circ}_{1},G^{\bullet}_{2}\}$ is\\ \\
	{\scriptsize$$\left[\begin{array}{cccc}
			2(J_{m\times m}-I_{m})-A(H)&2J_{m\times n}-M(G)&J_{m\times p }&2J_{m\times q}\\[2mm]
			2J_{n\times m}-M^{T}(G)&J_{n\times n}-I_{n}&2J_{n\times p }&J_{n\times q}\\[2mm]
			J_{p\times m }&2J_{p\times n}&2(J_{p\times p}-I_{p})-A(G_{1})&3J_{p\times q}\\[2mm]
			2J_{q\times m }&J_{q\times n}&3J_{q\times p}&2(J_{q\times q}-I_{q})-A(G_{2})\\
		\end{array}\right],$$}
	where $M(G)$ is the edge-vertex incidence matrix of $G$.	Let $A=2(J_{m\times m}-I_{m})-A(H)$, $B=A(G)+J_{n\times n}-I_{n},$ $C=2(J_{p\times p}-I_{p})-A(G_{1})$, $D=2(J_{q\times q}-I_{q})-A(G_{2})$, $M=2J_{m\times n}-M(G),$ $s=1$, $k=2$ and $l=3$. Then by Remark \ref{rmk}, $\mathcal{D}=\mathcal{P}[A,B,C,D,M,s,k,l]$. Further, the eigenvalues of $A$, $B$, $C$ and $D$ are respectively,
	\begin{enumerate}
		\item 
		$a_{1}=2m-2-t$, $a_{i}=-(2+\lambda_{i}(H))$ for $i=2,3,\ldots,m$.
		\item
		$ b_{1}=n-1$, $b_{i}=-1$ for $i=2,3,\ldots,n$.
		\item
		$ c_{1}=2p-r_{1}-2$, $c_{i}=-(\lambda_{i}(G_{1})+2)$ for $i=2,3,\ldots,p$.
		\item
		$ d_{1}=2q-r_{2}-2$, $d_{i}=-(\lambda_{i}(G_{2})+2)$ for $i=2,3,\ldots,q$.
	\end{enumerate}
	Plugging these values in Theorem \ref{rmk}, we obtain the required result. 
\end{proof}
\section{Application}
As an application of our results obtained in Section 3, here we give several families of distance equienergetic graphs.
Let $\mathcal{P}_{n}$ be the set of all partitions of the positive integer $n$ into parts of size greater than or equal to 3, i.e., $\mathcal{P}_{n}=\big\{(n_{1},n_{2},\ldots, n_{k})|k\ge 1, n_{i}\ge 3 ~\text{for}~ i=1,2,\ldots, k~ \text{and}~ \displaystyle\sum_{i=1}^{k}n_{i}=n \big\}.$ Let $C(\mathcal{P}_{n})$ denote a family of disjoint union of cycles given by $C(\mathcal{P}_{n})=\Big\{\displaystyle\bigcup_{i=1}^{s} C_{n_{i}}|(n_{1},n_{2},\ldots,n_{k})\\\in \mathcal{P}_{n}\Big\}$.\\  
 Classes of distance equienergetic double join graphs are presented in the following theorem. 
\begin{theorem}
  Let $G_{1}$ be a graph in $C(\mathcal{P}_{p})$  and let $G_{2}$ be an $r_{2}$-regular graph of order q.\begin{enumerate}[(i)]
  	\item
  	If G is an r-regular graph with n vertices and m edges. Then the two classes of  double join graphs \{$[S(G)]^{\overline{K_{m}}}_{\overline{K_{n}}}\vee \{G^{\circ}_{1},G^{\bullet}_{2}\}|G_{1}\in  C(\mathcal{P}_{p})\}$ and \{$[S(G)]^{\overline{K_{m}}}_{\overline{K_{n}}}\vee \{G^{\circ}_{1},G^{\bullet}_{2}\}|G_{2}\in  C(\mathcal{P}_{q})\}$ form two families of distance equienergetic graphs.
  	\item
  	If G is an r-regular triangle free graph with n vertices and m edges and also if $H\in \{\overline{K_{m}}, L(G), \overline{L(G)}\}$. Then  the two classes of  double join graphs \{$[S(G)]^{H}_{\overline{G}}\vee \{G^{\circ}_{1},G^{\bullet}_{2}\}|G_{1}\in  C(\mathcal{P}_{p})\}$ and \{$[S(G)]^{H}_{\overline{G}}\vee \{G^{\circ}_{1},G^{\bullet}_{2}\}|G_{2}\in  C(\mathcal{P}_{q})\}$ form two families of distance equienergetic graphs.
  	\item
  If G is an r-regular graph with n vertices and m edges and if $H \in \{\overline{K_{n}}, K_{n}, G, \overline{G}\}$.  Then  the classes of  double join graphs \{$[S(G)]^{K_{m}}_{H}\vee \{G^{\circ}_{1},G^{\bullet}_{2}\}|G_{1}\in  C(\mathcal{P}_{p})\}$ and \{$[S(G)]^{K_{m}}_{H}\vee \{G^{\circ}_{1},G^{\bullet}_{2}\}|G_{2}\in  C(\mathcal{P}_{q})\}$ form  two families of distance equienergetic graphs.
\item
If G is an r-regular graph with n vertices and m edges and if $H \in \{\overline{K_{m}}, L(G), \overline{L(G)}\}$.  Then  the two classes of  double join graphs \{$[S(G)]^{H}_{K_{n}}\vee \{G^{\circ}_{1},G^{\bullet}_{2}\}|G_{1}\in  C(\mathcal{P}_{p})\}$ and \{$[S(G)]^{H}_{K_{n}}\vee \{G^{\circ}_{1},G^{\bullet}_{2}\}|G_{2}\in  C(\mathcal{P}_{q})\}$ form two families of distance equienergetic graphs.
  \end{enumerate} 
\end{theorem}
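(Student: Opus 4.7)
The plan is to apply the distance spectral formulas obtained in Theorem \ref{thm1} (for part (i)) and its three analogues (for parts (ii)--(iv)) and to observe that, in every case, the distance spectrum of the double join splits into four blocks: (a) the eigenvalues $-(\lambda_i(G_1)+2)$ for $i=2,\ldots,p$ together with $-(\lambda_j(G_2)+2)$ for $j=2,\ldots,q$; (b) a block of eigenvalues coming solely from $[S(G)]^{H_1}_{H_2}$ (either $-2$ or $-1$ with multiplicity $m-n$, or $-(2+\lambda_i(H))$ for $i=n+1,\ldots,m$); (c) the $n-1$ pairs of eigenvalues built from $\lambda_i(G)$, $\lambda_i(H)$, $r$; and (d) the four eigenvalues of a $4\times 4$ quotient matrix whose entries depend only on $m,n,p,q,r,r_1,r_2$ (and on $t$, when $H$ is present).

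When $G_1$ ranges over $C(\mathcal{P}_p)$, every such $G_1$ is $2$-regular on $p$ vertices, so $r_1=2$ is fixed; likewise $r_2=2$ for any $G_2\in C(\mathcal{P}_q)$. Hence the $4\times 4$ matrix of block (d) does not change with the choice of partition, and blocks (b) and (c) are visibly independent of $G_1$ and $G_2$. So the whole task reduces to establishing that
\begin{equation*}
\sum_{i=2}^{p}|\lambda_i(G_1)+2| \;=\; 2p-4 \quad \text{for every } G_1\in C(\mathcal{P}_p),
\end{equation*}
together with the obvious analogue for $G_2\in C(\mathcal{P}_q)$; these identities will force the distance energies to coincide across each of the eight claimed families.

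To obtain the identity, I would use the fact that $C_n$ has adjacency eigenvalues $2\cos(2\pi k/n)$, $k=0,1,\ldots,n-1$, all lying in $[-2,2]$. Every $G_1\in C(\mathcal{P}_p)$ is a disjoint union of such cycles (each of length at least $3$), so every adjacency eigenvalue $\lambda$ of $G_1$ satisfies $\lambda+2\ge 0$. Dropping the absolute values and using $\mathrm{tr}(A(G_1))=0$ together with $\lambda_1(G_1)=2$ (by $2$-regularity) yields
\begin{equation*}
\sum_{i=2}^{p}|\lambda_i(G_1)+2| \;=\; \sum_{i=1}^{p}(\lambda_i(G_1)+2) - (\lambda_1(G_1)+2) \;=\; 2p-4,
\end{equation*}
as needed.

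The only genuine obstacle is the sign check that lets me drop the absolute values; this relies crucially on the restriction $n_i\ge 3$ in the definition of $\mathcal{P}_n$, which keeps every component a cycle and thus confines every adjacency eigenvalue to $[-2,2]$. Everything else is an assembly of already-computed spectral data from Section 3, and once the identity above is in place the equality of distance energies throughout each of the eight families follows immediately.
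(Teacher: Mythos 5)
Your proposal is correct and follows essentially the same route as the paper: both arguments isolate the $-(\lambda_i(G_1)+2)$ (resp.\ $-(\lambda_i(G_2)+2)$) block of the distance spectrum, note that every other eigenvalue depends only on $G$, $H$, and the fixed regularity $r_1=2$ (resp.\ $r_2=2$), and then drop the absolute values using $\lambda_i\ge -2$ for unions of cycles together with trace zero and $\lambda_1=2$. The only cosmetic difference is that you evaluate the common contribution as $2p-4$ while the paper shows the difference of two such sums vanishes.
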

\begin{proof}
Let $G_{1}=\displaystyle\bigcup_{i=1}^{k}C_{n_{i}}$ and $G^{\prime}_{1}=\displaystyle\bigcup_{i=1}^{k^{\prime}}C_{n^{\prime}_{i}}$ be two graphs in $C(\mathcal{P}_{p})$. Then $\lambda_{1}(G_{1})=\lambda_{1}(G^{\prime}_{1})=2$, $-2\le\lambda_{i}(G_{1})$ and$-2\le \lambda_{i}(G^{\prime}_{1})$ for $i=1,2,\ldots,p$. Let $\Gamma_{1}=[S(G)]^{\overline{K_{m}}}_{\overline{K_{n}}}\vee \{G^{\circ}_{1},G^{\bullet}_{2}\}$ and $\Gamma_{2}=[S(G)]^{\overline{K_{m}}}_{\overline{K_{n}}}\vee \{{G^{\prime}}^{\circ}_{1},G^{\bullet}_{2}\}$. Then from Theorem \ref{thm1}, we get \begin{eqnarray}\label{eqn}\mathcal{E}_{D}(\Gamma_{1})-\mathcal{E}_{D}(\Gamma_{2})&=&\sum_{i=2}^{p}|\lambda_{i}(G_{1})+2|-\sum_{i=2}^{p}|\lambda_{i}(G^{\prime}_{1})+2|\notag\\
&=&\sum_{i=2}^{p}(\lambda_{i}(G_{1})+2)-\sum_{i=2}^{p}(\lambda_{i}(G^{\prime}_{1})+2).
\end{eqnarray}
Since $\sum_{i=1}^{p}\lambda_{i}(G_{1})=\sum_{i=1}^{p}\lambda_{i}(G^{\prime}_{1})=0$ and $\lambda_{1}(G_{1})=\lambda_{1}(G^{\prime}_{1})=2$, the equation  (\ref{eqn}) simplifies to $\mathcal{E}_{D}(\Gamma_{1})=\mathcal{E}_{D}(\Gamma_{2})$. This completes the proof of (i). Similarly, rest of the proof follows.
\end{proof}

\end{document}